\def\BibTeX{{\rm B\kern-.05em{\sc i\kern-.025em b}\kern-.08em
    T\kern-.1667em\lower.7ex\hbox{E}\kern-.125emX}}
    \newtheorem{theorem}{Theorem}
\newtheorem{remark}[theorem]{Remark}
\newtheorem{lemma}[theorem]{Lemma}
\newcommand{\N}{\mathbb{N}}
\newcommand{\A}{\mathcal{A}}
\newcommand{\B}{\mathcal{B}}
\newcommand{\R}{\mathbb{R}}
\newcommand{\Kvu}{K^{vu}}
\newcommand{\Kvv}{K^{vv}}
\begin{document}
\title{Event-triggered boundary control of $2\times2$ semilinear hyperbolic systems}
\author{Timm Strecker, Michael Cantoni \IEEEmembership{Member, IEEE}, and Ole Morten Aamo, \IEEEmembership{Senior Member, IEEE}
\thanks{This work was supported by the Australian Research
Council under Grant LP160100666. }
\thanks{Timm Strecker and Michael Cantoni are with the Department of
Electrical and Electronic Engineering, The University of Melbourne,
Melbourne, VIC 3052, Australia (e-mail: timm.strecker@unimelb.edu.au;
cantoni@unimelb.edu.au). }
\thanks{Ole Morten Aamo is with the Department of Engineering Cybernetics,
Norwegian University of Science and Technology (NTNU), N-7491
Trondheim, Norway (e-mail: aamo@ntnu.no).}
}

\maketitle

\begin{abstract}
We present an event-triggered boundary control scheme for hyperbolic systems. The trigger condition is based on predictions of the state on determinate sets, where the control input is only updated if the predictions deviate from the reference by a given margin. Closed-loop stability and absence of Zeno behaviour is established analytically.
For the special case of linear systems, the trigger condition can be expressed in closed-form as an $L_2$-scalar product of kernels with the distributed state. The presented controller can also be combined with existing observers to solve the event-triggered output-feedback control problem. A numerical simulation demonstrates the effectiveness of the proposed approach.
\end{abstract}

\begin{IEEEkeywords}
 Boundary control, distributed-parameter systems, event-triggered control, semilinear hyperbolic systems
\end{IEEEkeywords}

\section{Introduction}

We consider $2\times 2$ semilinear hyperbolic systems  with one boundary actuator of the form
\begin{align}
u_t(x,t)&= -\lambda^u(x)\,u_x(x,t) + f^u(w(x,t),x), \label{u_t}\\
v_t(x,t)&= \phantom{-}\lambda^v(x)\,v_x(x,t) + f^v(w(x,t),x), \label{v_t}\\
u(0,t) &= g(v(0,t),t),\label{uBC}\\
v(1,t)&= U(t),\label{vBC}\\
w(\cdot,0) &= w_0,\label{w_0}
\end{align}
where $x\in[0,1]$, $t\geq 0$, $w(x,t) = \left(u(x,t),v(x,t)\right)$, subscripts denote partial derivatives, $U(t)$ is the control input, and $w_0$ the initial condition. 

Systems  of  form (\ref{u_t})-(\ref{w_0}) model a range of 1-d transport processes including gas or fluid flow through pipelines, open channel flow, traffic flow, electrical transmission lines, and blood flow in arteries \cite{bastin2016book}. Consequently, the control and observer design for such systems has received much attention.

Static controllers that achieve assymptotic convergence are designed using dissipative boundary conditions in \cite{greenberg1984effect} and using  control Lyapunov functions in \cite{coron2007strict}. The exact-finite time controllability of such systems is analysed in \cite{cirina1969boundary,li2003exact}. For the special case of linear systems,  backstepping  has become a popular method for designing feedback controllers that achieve this kind of performance. See, e.g., \cite{vazquez2011backstepping,aamo2013disturbance,vazquez2016bilateral,auriol2016two,deutscher2017finite,
auriol2016minimum,di2018stabilizationPDEODE,auriol2020seriesinterconnection} for a range of results using different configurations and stabilization or tracking as the objective. For semilinear and quasilinear systems, the same control performance has been achieved using a predictive approach \cite{strecker2017output,strecker2017seriesinterconnections,strecker2017twosided,strecker2021quasilinear2x2,strecker2021quasilinear_PDE_ODE}.

Recently, there has been interest in the  event-triggered boundary control of (linear) hyperbolic systems \cite{espitia2017event,espitia2020observer,wang2021adaptive,wang2020event}. The related topic of sampled-data control is analysed in \cite{davo2018stability}. In event-triggered  control, the control inputs are held piecewise constant and are updated only when needed, instead of continuously in time or in a periodic fashion. One of the main benefits of such an approach is that it reduces wear and tear on the physical actuators, which is of  interest in many applications.
See also \cite{heemels2012introduction} for event-triggered control of ordinary differential equations (ODE).

The contribution of this paper is an event-triggered implementation of the predictive approach from \cite{strecker2017output}. For clarity of presentation, the focus is on the stabilisation of semilinear hyperbolic systems using state-feedback  control, although the approach can readily be adapted to tracking problems, output-feedback control and  quasilinear systems.  Similarly, the extensions to $n+1$ systems (that is, vector-valued $u$ as considered in \cite{dimeglio2013stabilization} for linear systems), and to time-varying $f^u$ and $f^v$ are straightforward.

The paper is structured as follows. Section \ref{sec:preliminaries} contains several preparations including the precise model assumptions, sufficient conditions for stability and well-posedness and the continuous-time implementation of the controller. The event-triggered controller is presented in Section \ref{sec:event-triggered control}, including a closed-form implementation (i.e., without the need to solve PDEs online) for the special case of linear systems in Section \ref{sec:linear closed form}. Simulation examples are shown in Section \ref{sec:example} and concluding remarks are given in Section \ref{sec:conclusions}.

\section{Preliminaries}\label{sec:preliminaries}
\subsection{Assumptions}
We consider broad solutions to system (\ref{u_t})-(\ref{w_0}), which are a type of weak solution defined by integrating (\ref{u_t})-(\ref{v_t}) along  characteristic lines and solving the obtained integral equations \cite{bressan2000hyperbolic}. Well-posedness for broad solutions can be shown under the following assumptions.

 The speeds $\lambda^u,\lambda^v\in L^{\infty}([0,1])$ are assumed to be bounded from below and above  by the finite values
\begin{align}
\sup_{x\in[0,1]} \left( \min\{ \lambda^u(x),\,\lambda^v(x) \}^{-1}\right) &= k_{\Lambda^{-1}},\\
\sup_{x\in[0,1]}  \max\{ \lambda^u(x),\,\lambda^v(x) \} &= k_{\Lambda}
\end{align}
 The nonlinear functions $f^u$, $f^v$, and $g$ are globally Lipschitz-continuous in the state arguments, i.e.,
\begin{equation}
 \operatorname*{ess\,sup}_{x\in[0,1],w\in\R^2}\max\left\{  \|\partial_w f^u(w,x) \|_{\infty} , \|\partial_w f^v(w,x) \|_{\infty}  \right\}\ =l_F,  \label{boundF_w}
  \end{equation}
 \begin{equation}
 \operatorname*{ess\,sup}_{v \in\R,t\geq0}  |\partial_v g(v,t) | = l_{g},  \label{bound g0_v}
\end{equation}
where $\partial$ denotes partial derivatives\footnote{By Rademacher's theorem, the partial derivatives of Lipschitz-continuous functions exist almost everywhere \cite[Theorem 2.8]{bressan2000hyperbolic}.} and $l_F$ and $ l_{g}$ are the finite Lipschitz constants. We assume further that the origin is an equilibrium, i.e.,  
\begin{align}
f^u(0,x)& = f^v(0,x) =0 & \text{ for all } &x\in[0,1],\label{F(0)=0}  \\
 g(0,t)&= 0 & \text{ for all } &t\geq 0.  \label{g(0)=0}
\end{align} 
 The initial  conditions are assumed  bounded with $w_0\in L^{\infty}([0,1],\R^2)$.

\begin{remark}
The  global Lipschitz condition is restrictive, but  made here to simplify some of the proofs. For locally Lipschiz-continuous data, similar local results can be shown. See also \cite{li2003exact} for local controllability results.
\end{remark}

\subsection{Characteristic lines and determinate sets}
\begin{figure}[htbp!]\centering
\includegraphics[width=.7\columnwidth]{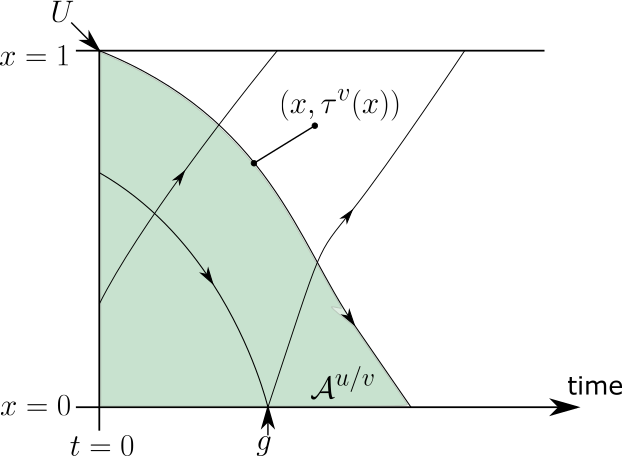}
\caption{Characteristic lines for system (\ref{u_t})-(\ref{w_0}) going upwards ($u$) and downwards ($v$). The area shaded in green indicates the determinate sets $\A^u(0)$ (including the line $(x,\tau^v(x))$) and $A^v(0)$ (excluding the line $(x,\tau^v(x))$) .}
\label{fig:characteristic lines}
\end{figure}
The effect of the control input $U(t)$ propagates through the domain $x\in[0,1]$ with finite speed $\lambda^v$. More precisely, the control input $U(t)$ entering at the boundary $x=1$ at time $t$ has an effect  on the state at some location $x$ only after a delay of $\tau^v(x)$, where
\begin{align}
\tau^u(x) &= \int_0^x \frac{1}{\lambda^u(\xi)}d\xi, & \tau^v(x) &= \int_x^1 \frac{1}{\lambda^v(\xi)}d\xi. \label{tau}
\end{align}
Consequently, the state on the determinate set  can be predicted based on the current state at time $t$, $w(\cdot,t)$, where the determinate sets are given by
\begin{align}
\A^u(t)&=\{(x,s):\,x\in[0,1],\,s\in[t,t+\tau^v(x)]\}, \\
\A^v(t)&=\{(x,s):\,x\in[0,1],\,s\in[t,t+\tau^v(x))\}.
\end{align}
\begin{lemma}\label{lemma determinate set}
For any $t\geq 0$, the Cauchy problem consisting of (\ref{u_t})-(\ref{uBC}) with $w(\cdot,t)$ as the initial condition at time $t$, has a unique solution $u(x,s)$ for $(x,s)\in\A^u(t)$ and $v(x,s)$ for $(x,s)\in\A^v(t)$.
Moreover, there exists  constant $c_1>0$ (depending on the model data) such that 
\begin{align}
\sup_{(x,s)\in\A^u(t)}|u(x,s)|&\leq c_1 \|w(\cdot,t)\|_{\infty}, \label{bound 1 u}\\
\sup_{(x,s)\in\A^v(t)}|v(x,s)| &\leq c_1 \|w(\cdot,t)\|_{\infty}.  \label{bound 1 v}
\end{align}
\end{lemma}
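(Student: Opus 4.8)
The plan is to reformulate the Cauchy problem as a system of integral equations along characteristic curves, establish existence and uniqueness by a contraction-mapping (Picard) argument, and then obtain the bounds \eqref{bound 1 u}--\eqref{bound 1 v} by a Grönwall estimate along those same characteristics. First I would fix $t$ and, for each point $(x,s)$ in the determinate set, trace the relevant characteristic backwards in time. The $u$-characteristic through $(x,s)$ descends in $x$ as time decreases and terminates either on the initial line $\{s=t\}$ at some foot $\chi^u(x,s)\in[0,1]$ (when $s<t+\tau^u(x)$) or at the boundary $x=0$ at time $s-\tau^u(x)$ (when $s\geq t+\tau^u(x)$); the $v$-characteristic through $(x,s)$ ascends in $x$ and, by the very definition of $\A^v(t)$, terminates on $\{s=t\}$ at a foot $\chi^v(x,s)\in[0,1]$ before it could reach $x=1$, since $s-\tau^v(x)<t$. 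Integrating \eqref{u_t}--\eqref{v_t} along these curves and using \eqref{uBC} at the boundary yields, for $w=(u,v)$ restricted to the determinate set, a fixed-point equation $w=\mathcal{T}w$ in which $\mathcal{T}$ maps a candidate field to the sum of its initial/boundary data and the accumulated source terms $\int f^u$ and $\int f^v$.

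The core of the existence-and-uniqueness claim is to show that $\mathcal{T}$ is a well-defined contraction on the space of bounded functions on the determinate set (with $u$ defined on $\A^u(t)$ and $v$ on $\A^v(t)$) equipped with the supremum norm. The subtle point, which I expect to be the main obstacle, is verifying that the construction is \emph{causally closed}: every value required to evaluate $\mathcal{T}w$ at a point of the determinate set must itself lie in the determinate set. The only coupling that could violate this is the boundary condition \eqref{uBC}, which forces $u(0,\cdot)$ to depend on $v(0,\cdot)$. For a $u$-characteristic hitting $x=0$ at time $s_0=s-\tau^u(x)$ with $(x,s)\in\A^u(t)$, one has $s_0\leq t+\tau^v(x)-\tau^u(x)<t+\tau^v(0)$, using $\tau^v(x)\leq\tau^v(0)$; hence $(0,s_0)\in\A^v(t)$ and the needed value of $v$ is available (the single limiting value at the top corner $s=t+\tau^v(0)$ being supplied by the initial datum at $(1,t)$, where the corresponding $v$-characteristic terminates). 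Moreover $v$ at $x=0$ never depends on $u$ at $x=0$ instantaneously, since its characteristic immediately enters the interior, so no algebraic loop forms through the boundary and each $u$-characteristic reflects off $x=0$ at most once. With this causal ordering in hand, contractivity follows from the Lipschitz bounds \eqref{boundF_w}, \eqref{bound g0_v}: two candidates are mapped to fields differing by at most $l_F$ times the time-integrated discrepancy plus, on the reflected part, $l_g$ times the boundary discrepancy, so a contraction is obtained either on a short sub-interval of length $\delta$ with $l_F\delta<1$ (then extended by iteration up to the finite temporal height $\tau^v(0)\leq k_{\Lambda^{-1}}$ of the set) or directly in a suitably weighted norm. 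This is the standard framework for broad solutions \cite{bressan2000hyperbolic}.

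For the bounds, I would run a Grönwall argument on the same integral representation. Since \eqref{F(0)=0} and \eqref{g(0)=0} give $f^u(0,x)=f^v(0,x)=0$ and $g(0,t)=0$, the Lipschitz estimates \eqref{boundF_w}, \eqref{bound g0_v} yield $|f^u(w,x)|,|f^v(w,x)|\leq c\,l_F\,|w|$ and $|g(v,s)|\leq l_g\,|v|$ for an absolute constant $c$. Writing $P(s)=\sup\{|w(x,\sigma)|:(x,\sigma)\text{ in the determinate set},\,\sigma\leq s\}$, the initial foot contributes at most $\|w(\cdot,t)\|_{\infty}$, a single reflection at $x=0$ contributes the factor $l_g$, and each source integral contributes $c\,l_F\int_t^s P$, giving $P(s)\leq\max\{1,l_g\}\,\|w(\cdot,t)\|_{\infty}+c\,l_F(1+l_g)\int_t^s P(\sigma)\,d\sigma$. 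Grönwall's inequality then bounds $P(s)$ by $\max\{1,l_g\}\,e^{c\,l_F(1+l_g)(s-t)}\,\|w(\cdot,t)\|_{\infty}$, and since every characteristic within the determinate set has temporal length at most $\tau^v(0)\leq k_{\Lambda^{-1}}$, taking the supremum over the set yields \eqref{bound 1 u}--\eqref{bound 1 v} with $c_1=\max\{1,l_g\}\,e^{c\,l_F(1+l_g)k_{\Lambda^{-1}}}$, which depends only on the model data, as claimed.
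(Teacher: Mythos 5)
Your proposal is correct and takes essentially the same approach as the paper, which does not write out the argument itself but defers to \cite[Appendix A]{strecker2017output}, where broad solutions on the determinate set are likewise constructed by integrating (\ref{u_t})--(\ref{v_t}) along characteristics, applying a Picard/contraction argument, and obtaining the sup-norm bounds (\ref{bound 1 u})--(\ref{bound 1 v}) via Gr\"onwall. Your explicit verification of causal closure through the reflection at $x=0$ (via (\ref{uBC})) and of the corner point $(0,t+\tau^v(0))$, together with the unrolling of the single reflection to avoid an $l_g P(s)$ term on the right-hand side of the Gr\"onwall inequality, supplies precisely the details the paper outsources to that reference.
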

\begin{proof}
The proof is given in \cite[Appendix A]{strecker2017output}. See also \cite{bressan2000hyperbolic,li2000semi} for a general discussion of determinate sets.
\end{proof}
\begin{remark}
One convenient implementation for determining the solution of $u(x,s)$ on $\A^u(t)$, including the solution on the line $u(x,t+\tau^v(x))$, $x\in[0,1]$, is to solve the system consisting of (\ref{u_t})-(\ref{vBC}) with initial condition $w(\cdot,t)$ and an arbitrary input $U$ (e.g., $U\equiv \lim_{x\rightarrow 1} v(x,t)$)  over the larger rectangular domain $[0,1]\times[t,t+\tau^v(0)]$. The solution of $u$ on $\A^u(t)$ can then be selected from the larger solution where, as seen in Lemma \ref{lemma determinate set}, it is independent of the choice for $U$.
\end{remark}

\subsection{Sufficient condition for convergence}

\begin{figure}[htbp!]\centering
\includegraphics[width=.9\columnwidth]{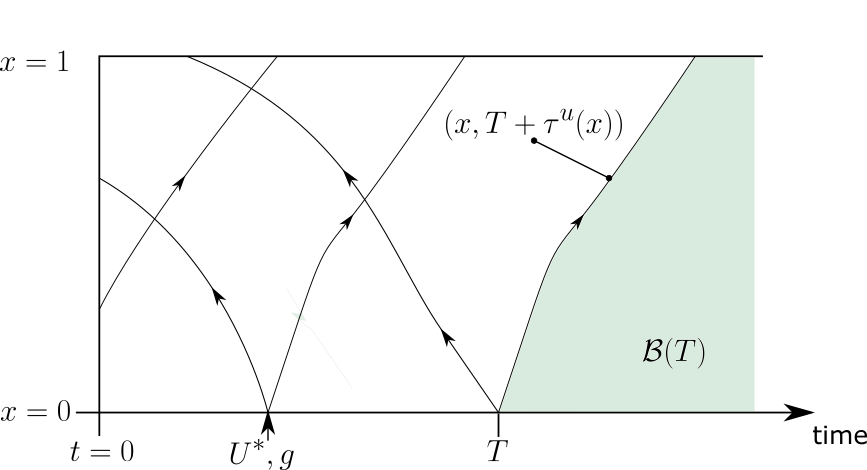}
\caption{Characteristic lines for system (\ref{u_x})-(\ref{u(0) x-direction}).}
\label{fig:characteristic lines x-direction}
\end{figure}

Following \cite{strecker2017output}, \cite{strecker2021quasilinear2x2}, we exploit that convergence to the origin is much easier to characterize via conditions on the uncontrolled boundary value $v(0,\cdot)$. In particular, we introduce the virtual input $U^*(t)$, which is the target value for $v(0,\cdot)$.  We then reverse the roles of $t$ and $x$ in (\ref{u_t}) and (\ref{v_t}) to formulate the system as a PDE in the positive $x$-direction for $x\in[0,1]$.
\begin{align}
u_x(x,t)&= - \frac{1}{\lambda^u(x,t)}\,u_t(x,t) + \frac{f^u(w(x,t),x)}{\lambda^u(x)}, \label{u_x}\\
v_x(x,t)&= \phantom{-} \frac{1}{\lambda^v(x,t)}\,v_t(x,t) - \frac{f^v(w(x,t),x)}{\lambda^v(x)},\label{v_x}\\
v(0,t) &= U^*(t), \label{v(0) x-direction}\\
u(0,t) &= g(U^*(t),t). \label{u(0) x-direction}
\end{align}
See also Figure \ref{fig:characteristic lines x-direction} for the characteristic lines of the transformed system (\ref{u_x})-(\ref{u(0) x-direction}).
Using the same methodology underlying Lemma \ref{lemma determinate set}, it is possible to show the following:
\begin{lemma}\label{lemma determinate set x direction}
For any $T\geq \tau^v(0)$, the transformed system (\ref{u_x})-(\ref{u(0) x-direction}) with $U^*(t)$ restricted to $t\geq T$,  has  unique solution  on the set 
\begin{equation}
\B(T) = \{(x,s):\, x\in[0,1],\,s\geq T+\tau^u(x)\},  \label{B}
\end{equation}
independently of the initial condition $w_0$. Moreover, there exists  constant $c_2>0$ (depending on the model data) such that 
\begin{equation}
\sup_{(x,s)\in\B(T)} \|w(x,s)\|_{\infty} \leq c_2\,\sup_{s\geq T} |U^*(s)|.  \label{bound 2}
\end{equation}
\end{lemma}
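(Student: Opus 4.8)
The plan is to recast the transformed system (\ref{u_x})--(\ref{u(0) x-direction}) as a pair of coupled integral equations along its characteristics, treating $x$ as the evolution variable, then to establish existence and uniqueness on $\B(T)$ by a fixed-point argument and to derive the bound (\ref{bound 2}) by a Gronwall estimate in $x$. This mirrors the broad-solution methodology underlying Lemma \ref{lemma determinate set}.

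First I would write down the characteristics of (\ref{u_x})--(\ref{v_x}) through a point $(x,s)$: the $u$-characteristic is $\xi\mapsto(\xi,t^u_{x,s}(\xi))$ with $t^u_{x,s}(\xi)=s-\int_\xi^x \tfrac{d\zeta}{\lambda^u(\zeta)}$, meeting $x=0$ at time $s-\tau^u(x)$; the $v$-characteristic is $\xi\mapsto(\xi,t^v_{x,s}(\xi))$ with $t^v_{x,s}(\xi)=s+\int_\xi^x \tfrac{d\zeta}{\lambda^v(\zeta)}$, meeting $x=0$ at time $s+\tau^v(0)-\tau^v(x)$. Integrating (\ref{u_x})--(\ref{v_x}) along these curves and inserting the boundary data (\ref{v(0) x-direction})--(\ref{u(0) x-direction}) yields
\begin{align}
u(x,s) &= g\big(U^*(s-\tau^u(x)),\,s-\tau^u(x)\big) + \int_0^x \frac{f^u\big(w(\xi,t^u_{x,s}(\xi)),\xi\big)}{\lambda^u(\xi)}\,d\xi, \\
v(x,s) &= U^*\big(s+\tau^v(0)-\tau^v(x)\big) - \int_0^x \frac{f^v\big(w(\xi,t^v_{x,s}(\xi)),\xi\big)}{\lambda^v(\xi)}\,d\xi.
\end{align}
The crucial structural step, which I expect to be the main obstacle, is verifying that the domains of dependence stay inside $\B(T)$. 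For $(x,s)\in\B(T)$ one has $s\geq T+\tau^u(x)$, and using that $\tau^u$ is increasing and $\tau^v$ decreasing one checks $t^u_{x,s}(\xi)\geq T+\tau^u(\xi)$ and $t^v_{x,s}(\xi)\geq T+\tau^u(\xi)$ for all $\xi\in[0,x]$; the latter reduces to $\tau^u(x)-\tau^u(\xi)\geq\tau^v(x)-\tau^v(\xi)$, which holds since the left side is nonnegative and the right side nonpositive. Hence every point at which $w$ is evaluated on the right-hand sides lies in $\B(T)$, and both characteristics reach $x=0$ only at times $\geq T$. This closes the integral equations on $\B(T)$ and shows the solution there depends only on $U^*|_{[T,\infty)}$, not on $w_0$. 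The asymmetry, that $u$-characteristics hit $x=0$ at the earlier time $s-\tau^u(x)$ while $v$-characteristics hit it at the later time $s+\tau^v(0)-\tau^v(x)$, is the delicate point and is exactly what dictates the shifted description $s\geq T+\tau^u(x)$ of $\B(T)$.

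With the domains of dependence controlled, existence and uniqueness follow from a contraction/Picard argument of the same type as in Lemma \ref{lemma determinate set} (proof in \cite[Appendix A]{strecker2017output}): the map defined by the right-hand sides above is, on each strip $x\in[0,\delta]$ with $\delta$ small, a contraction in $L^\infty(\B(T))$, because $f^u,f^v,g$ are Lipschitz by (\ref{boundF_w})--(\ref{bound g0_v}) and $1/\lambda^u,1/\lambda^v\leq k_{\Lambda^{-1}}$; iterating over finitely many strips covers $[0,1]$, and the same difference estimate gives uniqueness.

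For the bound, set $M=\sup_{s\geq T}|U^*(s)|$ and $W(x)=\sup_{s\geq T+\tau^u(x)}\big(|u(x,s)|+|v(x,s)|\big)$. Using $g(0,\cdot)=0$ from (\ref{g(0)=0}) and $f^u(0,\cdot)=f^v(0,\cdot)=0$ from (\ref{F(0)=0}) together with the Lipschitz constants gives $|g(U^*,\cdot)|\leq l_g M$ and $|f^u|,|f^v|\leq l_F(|u|+|v|)$ pointwise; since all source points lie in $\B(T)$, taking suprema in the integral equations yields
\[
W(x) \leq (1+l_g)\,M + 2\,l_F\,k_{\Lambda^{-1}}\int_0^x W(\xi)\,d\xi,
\]
so Gronwall's inequality gives $W(x)\leq (1+l_g)\,e^{2 l_F k_{\Lambda^{-1}}}\,M$ for all $x\in[0,1]$. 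As $\|w(x,s)\|_\infty\leq|u(x,s)|+|v(x,s)|\leq W(x)$, this is (\ref{bound 2}) with $c_2=(1+l_g)\,e^{2 l_F k_{\Lambda^{-1}}}$. Finally, the hypothesis $T\geq\tau^v(0)$ ensures prescribing $v(0,\cdot)=U^*$ on $[T,\infty)$ is consistent with the original dynamics, since $v(0,t)$ for $t<\tau^v(0)$ is still fixed by $w_0$ through the determinate set $\A^v(0)$.
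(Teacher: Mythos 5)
Your proposal is correct and takes essentially the approach the paper intends: the paper states Lemma \ref{lemma determinate set x direction} without a written proof, appealing to ``the same methodology underlying Lemma \ref{lemma determinate set}'' (broad solutions via integration along characteristics, \cite[Appendix A]{strecker2017output}), which is precisely what you carry out---integral equations with $x$ as the evolution variable, the verification that backward characteristics from $\B(T)$ remain in $\B(T)$ and exit only through $\{x=0,\,t\geq T\}$, a Picard/contraction argument for existence and uniqueness, and a Gronwall estimate yielding an explicit $c_2$. In particular, your domain-of-dependence computation (the $u$-characteristic exits at time $s-\tau^u(x)\geq T$, the $v$-characteristic at $s+\tau^v(0)-\tau^v(x)\geq T$) is exactly the observation that justifies the shape of $\B(T)$ and the independence from $w_0$.
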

Note that  $[0,1]\times[T+\tau^u(1),\infty) \subset\B(T)$. Therefore, (\ref{bound 2}) implies that if $U^*(t)=0$ for all $t\geq T$, then $w(\cdot,t) = 0$ for all $t\geq T+\tau^u(1)$.

\subsection{Continuous-time control design}\label{sec:continuous time controller}
The idea in \cite{strecker2017output} is to design $U(t)$ such that the future boundary value $v(0,t+\tau^v(0))$ becomes equal to the virtual input $U^*(t+\tau^v(0))$. By (\ref{bound 2}), if $v(0,t+\tau^v(0)) = U^*(t+\tau^v(0)) = 0$ for all $t\geq T$, the state reaches the origin by time $T + \tau^v(0) + \tau^u(1)$.  

Defining the future state on the characteristic line,
\begin{equation}
\bar{w}(x,t) =\left(\bar{u}(x,t),\bar{v}(x,t) \right) = w(x,t+\tau^v(x)),  \label{w_bar}
\end{equation}
the following relationship between $U$ and $v(0,t+\tau^v(0,t))$ is derived in  \cite[Theorem 2]{strecker2017output}.
\begin{lemma}
For given $\bar{u}$, the state $\bar{v}$ satisfies the ODE
\begin{align}
\bar{v}_x(x,t)&= -\frac{f^v((\bar{u}(x,t),\bar{v}(x,t)),x)}{\lambda^v(x)}, \label{vbar_x}\\
\bar{v}(1,t) &= U(t). \label{vbar_BC}
\end{align}
\end{lemma}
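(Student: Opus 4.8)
The plan is to recognise $\bar v(\cdot,t)$ as the restriction of $v$ to a single downward characteristic and then read the claimed ODE directly off the characteristic (integral) form that defines broad solutions. The key observation is that the shift $\tau^v$ is exactly the travel time along the $v$-characteristics: differentiating (\ref{tau}) gives $\tfrac{d}{dx}\tau^v(x)=-1/\lambda^v(x)$, and $\tau^v(1)=0$. Hence the curve $x\mapsto(x,t+\tau^v(x))$ passes through the boundary point $(1,t)$ and satisfies $\tfrac{dt}{dx}=-1/\lambda^v(x)$, which is precisely the characteristic direction of (\ref{v_t}). Thus, for each fixed $t$, $\bar v(\cdot,t)$ is the trace of $v$ along the characteristic emanating from $(1,t)$.

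The boundary condition (\ref{vbar_BC}) is then immediate: since $\tau^v(1)=0$, we have $\bar v(1,t)=v(1,t+\tau^v(1))=v(1,t)=U(t)$ by (\ref{vBC}). For (\ref{vbar_x}), the heuristic is a chain-rule computation, $\bar v_x(x,t)=v_x+(\tfrac{d}{dx}\tau^v)\,v_t=v_x-\tfrac{1}{\lambda^v(x)}v_t$ evaluated at $(x,t+\tau^v(x))$; substituting $v_t=\lambda^v v_x+f^v$ from (\ref{v_t}) cancels the $v_x$ and $v_t$ contributions and leaves $-f^v(\bar w(x,t),x)/\lambda^v(x)$, as claimed.

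The main obstacle is rigour, since the solutions are broad (weak) and $\lambda^v$ is only $L^\infty$, so $v_x$ and $v_t$ need not exist pointwise and the chain rule above is only formal. To make it precise I would work directly with the integral form defining the broad solution: integrating (\ref{v_t}) along the characteristic through $(1,t)$ yields
\[
\bar v(x,t)=U(t)+\int_x^1\frac{f^v(\bar w(\xi,t),\xi)}{\lambda^v(\xi)}\,d\xi.
\]
Equation (\ref{vbar_BC}) is the value at $x=1$, and differentiating this identity in $x$ — legitimate for almost every $x$ by the Lebesgue differentiation theorem, using that $1/\lambda^v\in L^\infty$ and that $f^v(\bar w(\cdot,t),\cdot)$ is bounded along the characteristic — recovers (\ref{vbar_x}). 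The one ingredient that must be invoked is that $\bar w(\cdot,t)$ is well defined and bounded on $[0,1]$, which is exactly the content of the determinate-set result, Lemma \ref{lemma determinate set}.
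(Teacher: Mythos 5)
Your proof is correct, but note that the paper itself does not prove this lemma: it is stated with a pointer to Theorem 2 of \cite{strecker2017output}, so your argument supplies a self-contained derivation of what the paper delegates to a citation. Your route---identifying $x\mapsto(x,t+\tau^v(x))$ as the $v$-characteristic through $(1,t)$ (using $\tfrac{d}{dx}\tau^v=-1/\lambda^v$ a.e.\ and $\tau^v(1)=0$), reading (\ref{vbar_BC}) off $\bar v(1,t)=v(1,t)=U(t)$, and then replacing the merely formal chain-rule cancellation by the integral identity that \emph{defines} broad solutions along characteristics, differentiated a.e.\ by Lebesgue's theorem---is the standard argument and is in the same spirit as the cited reference, so it buys the reader a proof without leaving the paper. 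One small imprecision worth fixing: you invoke Lemma \ref{lemma determinate set} for well-definedness and boundedness of all of $\bar w(\cdot,t)$, but that lemma bounds $v$ only on $\A^v(t)$, which \emph{excludes} the line $(x,t+\tau^v(x))$; what Lemma \ref{lemma determinate set} actually gives you is $\bar u$ (the line is contained in $\A^u(t)$). The quantity $\bar v$ on that line is precisely what the present lemma characterizes, and its existence and boundedness follow either from well-posedness of the full system under the input $U$, or directly from your integral equation with data $U(t)$ and the bounded $\bar u$, via Gronwall. This does not invalidate your argument---the integral identity you write is the defining property of the broad solution along that characteristic---but the justification should rest on it rather than on the determinate-set bound for $v$.
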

Crucially, (\ref{vbar_x}) is an ODE in the $x$-direction with no time-dynamics. Consequently, it can be solved in either $x$-direction. In (\ref{vbar_x})-(\ref{vbar_BC}), it is solved in  the negative $x$ direction, so that $\bar{v}(0,t)$ is a function of $U(t)$. As originally explored in \cite{strecker2017output}, the alternative  is to start with the desired $U^*(t+\tau^v(0))$ and solve a copy of (\ref{vbar_x}) in the \emph{positive} $x$-direction, i.e., backwards relative to how the actual input $U$ propagates through the domain. By setting $U^*(t)\equiv 0$, the closed-loop system converges to the origin in finite (minimum) time.

\begin{theorem}\label{theorem continuous time}
Consider the system consisting of (\ref{u_t})-(\ref{w_0}) in closed loop with $U(t)$ at each $t\geq 0$ set according to  the following algorithm:
%
\begin{enumerate}
\item Predict $\bar{u}(\cdot,t)$ by solving (\ref{u_t})-(\ref{uBC}) with $w(\cdot,t)$ as initial condition over the domain $\A^u(t)$;
\item Solve the target system 
\begin{align}
\bar{v}^*_x(x,t)&= -\frac{f^v((\bar{u}(x,t),\bar{v}^*(x,t)),x)}{\lambda^v(x)}, \label{vbar*_x}\\
\bar{v}^*(0,t) &= 0; \label{vbar*_BC}
\end{align}
\item Set $U(t)=\bar{v}^*(1,t)$.
\end{enumerate}
Then  $w(\cdot,t)=0$ for all $t\geq \tau^v(0) + \tau^u(1)$.
\end{theorem}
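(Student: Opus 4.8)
The plan is to show that the feedback law forces the future uncontrolled boundary value $v(0,\cdot)$ to vanish after the transport delay $\tau^v(0)$, and then to invoke Lemma~\ref{lemma determinate set x direction} to convert this into finite-time convergence of the full state. The whole argument thus reduces to two assertions: that the controller achieves $\bar v(0,t)=0$ for every $t\geq0$, and that a vanishing uncontrolled boundary value propagates to $w\equiv0$ on a determinate set.

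First I would argue that the prediction step is well-defined and non-circular. By Lemma~\ref{lemma determinate set}, the value $\bar u(x,t)=u(x,t+\tau^v(x))$ lies on the boundary of the determinate set $\A^u(t)$ and is therefore determined by $w(\cdot,t)$ alone, independently of the control input applied after time $t$. Hence $\bar u(\cdot,t)$ can be computed in Step~1 without knowing $U$, and the same $\bar u(\cdot,t)$ enters both the ODE (\ref{vbar_x})--(\ref{vbar_BC}) satisfied by the actual future state $\bar v$ and the target ODE (\ref{vbar*_x})--(\ref{vbar*_BC}) defining $\bar v^*$.

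The core of the argument is the identity $\bar v(\cdot,t)\equiv\bar v^*(\cdot,t)$. Both $\bar v(\cdot,t)$ and $\bar v^*(\cdot,t)$ solve the same first-order ODE in $x$, namely $y_x=-f^v((\bar u(x,t),y),x)/\lambda^v(x)$, whose right-hand side is Lipschitz in $y$ because $f^v$ is globally Lipschitz by (\ref{boundF_w}) and $\lambda^v$ is bounded below. Step~3 sets $U(t)=\bar v^*(1,t)$, so by (\ref{vbar_BC}) the two solutions share the same boundary value $\bar v(1,t)=U(t)=\bar v^*(1,t)$ at $x=1$. By uniqueness of solutions to the Lipschitz ODE, $\bar v(x,t)=\bar v^*(x,t)$ for all $x\in[0,1]$, and in particular $\bar v(0,t)=\bar v^*(0,t)=0$ by (\ref{vbar*_BC}). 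Recalling the definition (\ref{w_bar}), this reads $v(0,t+\tau^v(0))=0$ for every $t\geq0$, i.e.\ $v(0,s)=0$ for all $s\geq\tau^v(0)$.

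Finally I would apply Lemma~\ref{lemma determinate set x direction} with $T=\tau^v(0)$ and the virtual input $U^*(\cdot)=v(0,\cdot)$, which vanishes on $[T,\infty)$. The bound (\ref{bound 2}) then gives $\sup_{(x,s)\in\B(T)}\|w(x,s)\|_\infty\leq c_2\sup_{s\geq T}|U^*(s)|=0$, so $w\equiv0$ on $\B(T)$. Since $[0,1]\times[T+\tau^u(1),\infty)\subset\B(T)$, it follows that $w(\cdot,s)=0$ for all $s\geq\tau^v(0)+\tau^u(1)$, as claimed. The main obstacle I anticipate is not the convergence chain itself, which is short, but justifying that the closed loop is well-posed: one must check that the implicitly defined feedback $U(t)=\bar v^*(1,t)$ yields an admissible input for which the broad solution of (\ref{u_t})--(\ref{w_0}) exists and is unique, so that the relation (\ref{vbar_x})--(\ref{vbar_BC}) between $U$ and $\bar v$ actually holds along the realized trajectory. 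The determinate-set structure of Lemma~\ref{lemma determinate set}, which decouples the prediction $\bar u(\cdot,t)$ from the future input, is precisely what rules out circularity and makes this rigorous.
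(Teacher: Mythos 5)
Your proposal is correct and follows essentially the same route as the paper's proof: uniqueness of solutions to the Lipschitz ODE shared by $\bar v$ and $\bar v^*$ forces $\bar v(0,t)=\bar v^*(0,t)=0$ once $U(t)=\bar v^*(1,t)$, and then the bound (\ref{bound 2}) of Lemma \ref{lemma determinate set x direction} yields $w\equiv 0$ after time $\tau^v(0)+\tau^u(1)$. Your write-up is in fact slightly more careful than the paper's (it makes the application of Lemma \ref{lemma determinate set} for non-circularity explicit, and it states the correct delay $\tau^v(0)$ where the paper's proof has the typo $\tau^v(1)$), but the underlying argument is identical.
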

\begin{proof}
With $\bar{u}(\cdot,t)$ determined uniquely by the prediction in step 1),  (\ref{vbar_x}) and (\ref{vbar*_x}) are equivalent ODEs in $\bar{v}(\cdot,t)$ and $\bar{v}^*(\cdot,t)$, respectively. Since these ODEs have unique solution under the given assumptions, $\bar{v}(0,t)=\bar{v}^*(0,t) = 0$ if and only if $U(t) = \bar{v}(1,t) = \bar{v}^*(1,t)$.  That is, the construction in steps (1)-(3) ensures  $v(0,t)=0$ for all $t\geq \tau^v(1)$. Convergence to the origin then follows from  (\ref{bound 2}).
\end{proof}

\section{Event-triggered control}\label{sec:event-triggered control}
In this section,  an event-triggered implementation of the controller from Section \ref{sec:continuous time controller} is developed. That is, the control input is restricted to be piecewise constant with 
\begin{align}
U(t)&=U(t_k) & \text{for all }& t\in[t_k,t_{k+1})
\end{align}
where $t_k$, $k\in\N$, are the update times. 

In the continuous-time implementation of the feedback controller, the control input is updated continuously so that the uncontrolled boundary value is kept  at $v(0,t) = U^*(t) = 0$ for $t\geq \tau^v(0)$. In the absence of any disturbances or prediction errors due to uncertainty, this ensures that the state is kept exactly at the origin. While such idealized assumptions are never exactly  achievable  in practice, it is usually also acceptable if the state remains sufficiently ``close'' to the origin. By the bound in (\ref{bound 2}), one can ensure that $\|w(\cdot,t)\|\leq \tilde{\epsilon}$ for any $\tilde{\epsilon}>0$ and sufficiently large $t$ by keeping $|v(0,t)|\leq \frac{\tilde{\epsilon}}{c_2}$.  Therefore, the idea of the event-triggered control law is to compute the control inputs in the same way as in Theorem \ref{theorem continuous time}, but to only update the control input at times $t_k$ where the prediction  of $|v(0,t_k+\tau^v(0))|$ under the previous control input exceeds some threshold $\varepsilon>0$. 

We propose the following event-triggered feedback controller. At time   $t>0$, given the state $w(\cdot,t)$ and threshold $\varepsilon$,  the control input $U(t)$  is set according to the following algorithm, which is initialized with $\bar{U}=U(0)$ and $t_0=0$. The algorithm also produces the Zeno-free sequence of update times $t_k$, as established in Theorem \ref{theorem event-triggered}.
\begin{algorithm}[H]
\begin{algorithmic}[1]
\STATE Predict $\bar{u}(\cdot,t)$ by solving (\ref{u_t})-(\ref{uBC}) with  state-measurement $w(\cdot,t)$ as initial condition over the domain $\A^u(t)$;
\STATE Predict $v(0,t+\tau^v(0))=\bar{v}^{\circ}(0,t)$ under the current input by solving
\begin{align}
\bar{v}_x^{\circ}(x,t)&= -\frac{f^v((\bar{u}(x,t),\bar{v}^{\circ}(x,t)),x)}{\lambda^v(x)}, \label{vbarcirc_x}\\
\bar{v}^{\circ}(1,t) &= \bar{U}; \label{vbarcirc_BC}
\end{align}
 \IF{$|\bar{v}^{\circ}(0,t)|> \varepsilon$}
	\STATE solve target dynamics (\ref{vbar*_x})-(\ref{vbar*_BC});
	\STATE  set $\bar{U} = \bar{v}^*(1,t)$ and append update times by $t_{k+1}=t$;
	\ENDIF
\STATE Set $U(t) = \bar{U}$.
\end{algorithmic}
\caption{Event-triggered control algorithm }
\label{algorithm}
\end{algorithm}
%
\begin{theorem}\label{theorem event-triggered}
The system consisting of (\ref{u_t})-(\ref{w_0}) in closed-loop with $U(t)$ constructed by the event-triggered control algorithm above satisfies $|v(0,t)|\leq \varepsilon$ for all $t\geq \tau^v(0)$ and $\|w(\cdot,t)\|_{\infty}\leq c_2\,\varepsilon$ for all $t\geq \tau^v(0)+\tau^u(1)$. Moreover, there exists  $\Delta>0$, which depends on $\varepsilon$, $\|w_0\|_{\infty}$ and the model data, such that $t_{k+1}-t_k\geq \Delta$ for all $k\in\N$. 
\end{theorem}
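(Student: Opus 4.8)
The plan is to read off the two state bounds from the fact that the monitored quantity $\bar v^{\circ}(0,t)$ is an \emph{exact} prediction of the future boundary value, and then to obtain the dwell-time bound from a time-regularity estimate for that quantity. First I would observe that $\bar v^{\circ}(0,t)=v(0,t+\tau^v(0))$ in closed loop: by Lemma \ref{lemma determinate set} the prediction $\bar u(\cdot,t)$ in step 1 is determined by $w(\cdot,t)$ alone and is independent of the future input, while (\ref{vbarcirc_x})-(\ref{vbarcirc_BC}) is precisely the relationship (\ref{vbar_x})-(\ref{vbar_BC}) between $v(0,t+\tau^v(0))$ and the instantaneous input value $\bar U=U(t)$. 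Since the input is frozen on each $[t_k,t_{k+1})$ and discontinuities of $\bar u$ enter the ODE only under the integral of $f^v$, the map $t\mapsto\bar v^{\circ}(0,t)$ is continuous there, so the trigger fires the instant $|\bar v^{\circ}(0,t)|$ reaches $\varepsilon$ and the value is reset before it can exceed $\varepsilon$. This gives $|\bar v^{\circ}(0,t)|\le\varepsilon$ for all $t\ge0$, i.e. $|v(0,s)|\le\varepsilon$ for $s\ge\tau^v(0)$, the first claim. The second claim is then immediate from Lemma \ref{lemma determinate set x direction}: applying (\ref{bound 2}) with $U^*(\cdot)=v(0,\cdot)$ and $T=\tau^v(0)$, and using $[0,1]\times[\tau^v(0)+\tau^u(1),\infty)\subset\B(\tau^v(0))$, yields $\|w(\cdot,t)\|_{\infty}\le c_2\varepsilon$ for $t\ge\tau^v(0)+\tau^u(1)$.

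For the dwell-time bound I would first record a uniform state bound $\|w(\cdot,t)\|_{\infty}\le W$ for all $t\ge0$, where $W$ depends on $\|w_0\|_{\infty}$, $\varepsilon$ and the data, combining the finite-horizon growth estimate of Lemma \ref{lemma determinate set} on $[0,\tau^v(0)+\tau^u(1)]$ with the $c_2\varepsilon$ bound afterwards. The key structural fact is that each update \emph{resets} the monitored quantity: with the new $\bar U=\bar v^*(1,t_k)$ the solutions of (\ref{vbarcirc_x})-(\ref{vbarcirc_BC}) and (\ref{vbar*_x})-(\ref{vbar*_BC}) coincide, so $\bar v^{\circ}(0,t_k)=\bar v^*(0,t_k)=0$. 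Hence on $[t_k,t_{k+1})$ the quantity $\bar v^{\circ}(0,\cdot)$ starts at $0$ and a new trigger requires it to reach $\pm\varepsilon$. The whole problem therefore reduces to a Lipschitz-in-time estimate $|\bar v^{\circ}(0,t')-\bar v^{\circ}(0,t)|\le L\,|t'-t|$ for $t,t'$ in the same interval, with $L=L(W,\mathrm{data})$; then no trigger can fire before $\Delta:=\varepsilon/L$ has elapsed, so $t_{k+1}-t_k\ge\Delta$.

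To obtain this estimate I would, at fixed frozen $\bar U$, compare the ODE (\ref{vbarcirc_x}) for the two data $\bar u(\cdot,t)$ and $\bar u(\cdot,t')$ and apply Gr\"onwall, but in the $L^1$-in-$x$ norm, giving $|\bar v^{\circ}(0,t')-\bar v^{\circ}(0,t)|\le C_1\|\bar u(\cdot,t')-\bar u(\cdot,t)\|_{L^1}$ with $C_1=C_1(l_F,k_{\Lambda^{-1}})$. Since $\bar u(x,t)=u(x,t+\tau^v(x))$, it remains to bound the $L^1$-in-$x$ time-modulus of continuity of the broad solution $u$ over a lag $|t'-t|$, which I would estimate from the characteristic integral equation for $u$, through the boundary term $g(v(0,\cdot),\cdot)$ and the time-integral of $f^u$, using $l_F$, $l_g$, $W$ and the finite speeds $k_{\Lambda},k_{\Lambda^{-1}}$.

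The main obstacle is exactly this last step, because the piecewise-constant input makes $u$, and hence $\bar u$, genuinely discontinuous: the jump in $U$ at each $t_k$ propagates along characteristics, so $\bar u(\cdot,t)$ carries moving jumps of size $O(\varepsilon)$ and is \emph{not} Lipschitz in $t$ in $L^{\infty}$; passing to $L^1$ is essential. The point of the $L^1$ norm is that finite propagation speed forces the set of $x$ at which a propagated discontinuity is crossed during an increment $h$ to have measure $O(h)$, with constant governed by $k_{\Lambda}$, so each in-flight discontinuity contributes only $O(\varepsilon h)$ to the increment. What must still be controlled is the \emph{number} of discontinuities simultaneously present, and here I would use that only updates from the last $\tau^v(0)+\tau^u(1)$ time units are in flight, closing the argument by induction on $k$: a dwell time $\ge\Delta$ up to step $k$ bounds the number of active jumps in this fixed horizon, which in turn validates the Lipschitz constant used to bound $t_{k+1}-t_k$. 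Verifying that this induction closes with a single data-dependent $\Delta>0$, i.e. that the discontinuity contribution does not overwhelm the smooth part of the modulus, is the delicate point of the theorem and the step I would expect to require the most care.
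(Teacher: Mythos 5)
Your overall skeleton coincides with the paper's: the reset property $\bar v^{\circ}(0,t_k)=0$ after each update, the identification $\bar v^{\circ}(0,t)=v(0,t+\tau^v(0))$ in closed loop, the a-priori bound on $\|w\|_{\infty}$, the reduction of the dwell-time claim to a time-regularity estimate for $t\mapsto\bar v^{\circ}(0,t)$ between updates, and your derivation of the two state bounds from (\ref{bound 2}) all match the paper and are correct. The gap lies in how you establish the time-regularity estimate, and it sits exactly at the step you flag as delicate. Your route (fixed-$x$ comparison of $\bar u(\cdot,t)$ and $\bar u(\cdot,t_k)$ in $L^1$, plus counting of in-flight discontinuities, plus induction on $k$) fails on two counts. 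First, the induction does not close: under the inductive hypothesis there are $N\approx(\tau^v(0)+\tau^u(1))/\Delta$ in-flight jumps of size $O(\varepsilon)$, each contributing $O(\varepsilon h)$ (jump size times propagation speed times lag) to $\|\bar u(\cdot,t+h)-\bar u(\cdot,t)\|_{L^1}$; hence the drift of $\bar v^{\circ}(0,\cdot)$ accumulated over a window of length $\Delta$ from the jumps alone is of order $C\varepsilon N\Delta\approx C\varepsilon(\tau^v(0)+\tau^u(1))$, which is \emph{independent of} $\Delta$ — shrinking $\Delta$ increases $N$ in exact proportion. Requiring this drift to stay below $\varepsilon$ therefore imposes a smallness condition of the form $C(\tau^v(0)+\tau^u(1))<1$ on the Lipschitz constants and transport times, which is not assumed and is generally false. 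Second, even the ``smooth part'' of your $L^1$ modulus is not $O(h)$: $w_0$ is merely $L^{\infty}$, its roughness is transported along characteristics, and the $L^1$ translation modulus of an $L^{\infty}$ function can be arbitrarily slow; so on the initial horizon your estimate yields at best a $\Delta$ depending on the modulus of continuity of $w_0$, not only on $\|w_0\|_{\infty}$ as the theorem asserts.

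The missing idea — and the paper's key step — is to compare the two predictions \emph{along the characteristics of $u$} rather than at fixed $x$. Writing the prediction as the integral equation (\ref{vbar intergral equation}) at times $t$ and $t_k$ and substituting $x\mapsto z(x,t_k;t)$ on the portion of the domain whose characteristics connect the two time slices, the difference splits as $I_1+I_2+I_3$: a matched integral $I_1$ in which $\bar w(z(x,t_k;t),t)$ is compared with $\bar w(x,t_k)$ on the \emph{same} characteristic, and two unmatched end-pieces $I_2,I_3$ over intervals of length $O(k_{\Lambda}(t-t_k))$. Broad solutions are Lipschitz in time along characteristics ($\tfrac{d}{dt}u=f^u$ with bounded right-hand side), no matter how discontinuous the profile is transversally; so $I_1=O(t-t_k)$ with constant controlled by the a-priori bound (using (\ref{vcirc bound}) for the $\bar v^{\circ}$ contribution), while $I_2,I_3=O(t-t_k)$ follow trivially from the sup bound and the shortness of the intervals. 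Both of your obstacles then vanish: moving jumps and transported initial roughness are invisible to an along-characteristic comparison, no counting or induction over jumps is needed, and one obtains a modulus $h$ with $h(0)=0$ valid for all $L^{\infty}$ data, whence the dwell time $\Delta=h^{-1}(\varepsilon)>0$ depends only on $\varepsilon$, $\|w_0\|_{\infty}$ and the model data, as required.
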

\begin{proof}
The construction is such that if $|\bar{v}(0,t)| > \varepsilon$, then $U$ is updated to a value such that $\bar{v}(0,t)$ becomes zero. The bound on $\|w(\cdot,t)\|_{\infty}$ for $t\geq \tau^v(0)+\tau^u(1)$ follows directly from (\ref{bound 2}). It only remains to be shown that $t_{k+1}-t_k\geq \Delta$. The idea of the proof is to show that for short $\Delta$ the solution to (\ref{vbarcirc_x})-(\ref{vbarcirc_BC}) at $x=0$ is continuous in time.

We first establish an a-priori bound on $\|w\|_{\infty}$. Choosing $\varepsilon< c_1\,\|w_0\|_{\infty}$ and using bound (\ref{bound 1 v}) we have that the closed-loop system satisfies $\|v(0,\cdot)\|_{\infty}\leq c_1\,\|w_0\|_{\infty}$. Thus, via bound (\ref{bound 2}) we obtain that $\|w\|_{\infty}\leq c_1\,c_2\,\|w_0\|_{\infty}$. 

 The control input $\bar{U}$ is first initialized at time $t=0$. Assuming $\bar{U}$ was last updated at time $t_k$,  we show by induction that at least some fixed time $\Delta>0$ elapses before the update condition in line 3 of Algorithm \ref{algorithm} can be triggered. 

We rewrite (\ref{vbarcirc_x})-(\ref{vbarcirc_BC}) as an integral equation with $\bar{u}$ playing the role of a parameter
\begin{equation}
\bar{v}^{\circ}[\bar{u}](x,t) = \bar{U} - \int_1^x \frac{f^v((\bar{u}(\xi,t),\bar{v}^{\circ}(\xi,t)),\xi)}{\lambda^v(\xi)} d\xi. \label{vbar intergral equation}
\end{equation}
Due to the Lipschitz assumptions, the solution to (\ref{vbar intergral equation}) depends continuously on $\bar{u}$, i.e., there exists a constant $c_3$ such that 
\begin{equation}
\|\bar{v}^{\circ}[\bar{u}_1]-\bar{v}^{\circ}[\bar{u}_2] \|_{\infty} \leq c_3\, \|\bar{u}_1 - \bar{u}_2\|_{\infty}  \label{vcirc bound}
\end{equation}

Parameterize the characteristic lines of $u$ passing through a point $(x,t)$ via the solution of the ODE
\begin{align}
\frac{d}{ds}z(x,t;s)&= \lambda^u(z(x,t;s)), & z(x,t;t)&=x.
\end{align}
Then $u$ is continuous in $s$ along the characteristic line $(z(x,t;s),s)$ as long as $z(x,t;s)\in[0,1]$ 
(see \cite[Section A.3]{strecker2017output}). In particular, $u$ satisfies
\begin{equation}
\frac{d}{dt}u(z(x,t_k;t),t) = f^u(w(z(x,t_k;t),t),z(x,t_k;t)).
\end{equation}
 Hence, 
\begin{equation}
|u(z(x,t_k;t),t)-u(x,t_k)| \leq c_4 e^{c_5 (t-t_k)}
\end{equation}
where $c_4$ scales linearly with $\|w\|_{\infty}$. Applying the same approach to the transformed system (\ref{w_bar}) gives
\begin{equation}
|\bar{u}(z(x,t_k;t))-\bar{u}(x,t_k)| \leq c_4 e^{c_5 (t-t_k)}.
\end{equation}

Define $\theta^v(x) = \int_0^x\frac{1}{\lambda^v(\xi)}d\xi$.
Also define $y_1(t-t_k)$ as the solution to the equation $\theta^v(y_1) = t-t_k$, and $y_2(t-t_k)$ as the solution to $\tau^v(y_2)=t-t_k$.
 Then
\begin{align}
 |y_1(\Delta)|&\leq\Delta\,k_{\Lambda}&\text{ and }& &|1-y_2(\Delta)|&\leq\Delta\,k_{\Lambda}. \label{bound y1 y2}
\end{align}

 \begin{figure}[htbp!]\centering
 \includegraphics[width=.75\columnwidth]{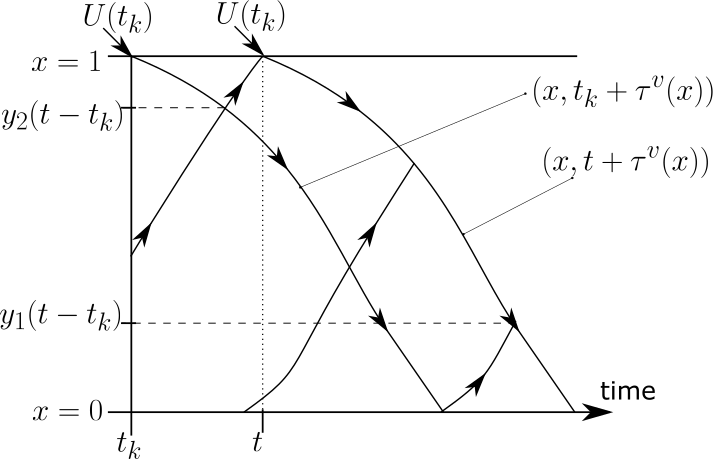}
 \caption{Schematic of the integrations paths and continuity of $u$ along its (upwards) characteristic lines  in the proof of Theorem \ref{theorem event-triggered}.}
 \label{fig:proof event triggered}
 \end{figure}
 
For all $\Delta<\tau^u(1)$, it follows that $z_1(\Delta)<1$ and $z_2(\Delta) > 0$. So for any $x_1\in[z_1(t-t_k),1]$ there exists  $x_2\in[0,z_2(t-t_k)]$ such that $x_1 = z(x_2,t_k;t)$. Using this, the integrals in (\ref{vbar intergral equation}) can be split into different parts  (see also Figure \ref{fig:proof event triggered})  to obtain
\begin{equation}
\begin{aligned}
&|\bar{v}^{\circ}[\bar{u}(\cdot,t)](0,t) - \bar{v}^{\circ}[\bar{u}(\cdot,t_k)](0,t_k) |\\
&= \left|\int_0^{y_2(t-t_k)}  \frac{f^v(\bar{w}(z(x,t_k;t),t),z(x,t_k;t))}{\lambda^v(z(x,t_k;t))}   \right.  \\
&\qquad - \frac{f^v(\bar{w}(x,t_k),x) }{\lambda^v(x)}dx\\
&\quad +  \int_0^{y_1(t-t_k)} \frac{f^v(\bar{w}(x,t),x) }{\lambda^v(x)}dx \\
& \left. \quad -  \int_{y_2(t-t_k)}^1 \frac{f^v(\bar{w}(x,t_k),x) }{\lambda^v(x)}dx  \right|=\left|I_1+I_2+I_3\right|,
 \end{aligned}\label{sum of integrals}
\end{equation}
where $U(t)-U(t_k)=0$ has been used, and $I_1$, $I_2$ and $I_3$ are the first, second and third integrals, respectively. This can be bounded as follows. Due to the Lipschitz condition we have that  
\begin{equation}
\begin{aligned}
|I_1|&\leq l_F k_{\Lambda^{-1}} \left( \|\bar{u}(\cdot,t)-\bar{u}(\cdot,t_k)\|_{\infty}\right. \\
&\hspace{1.8cm}+ \left. \|\bar{v}^{\circ}[\bar{u}(\cdot,t)]-\bar{v}^{\circ}[\bar{u}(\cdot,t_k)]\|_{\infty} \right)\\
&\leq l_F k_{\Lambda^{-1}}(1+c_3\,) \|\bar{u}(\cdot,t)-\bar{u}(\cdot,t_k)\|_{\infty},
\end{aligned}
\end{equation}
where  (\ref{vcirc bound}) is used in the last step. Due to (\ref{bound y1 y2}) and using the Lipschitz conditions and the a-priori bound on $\|w\|$, 
\begin{align}
|I_2|\leq 2\, l_F k_{\Lambda^{-1}} c_1c_2 k_{\Lambda} (t-t_k),\\
|I_3|\leq 2\, l_F k_{\Lambda^{-1}} c_1c_2 k_{\Lambda} (t-t_k).
\end{align}
In summary, the integrals on the right-hand side of (\ref{sum of integrals}) are bounded and
\begin{equation}
|\bar{v}^{\circ}[\bar{u}(\cdot,t)](0,t) - \bar{v}^{\circ}[\bar{u}(\cdot,t_k)](0,t_k) | \leq h(t-t_k)
\end{equation}
with some continuous, strictly increasing function $h$ that satisfies $h(0)=0$ (where $h$ also depends on the a-priori bound on $\|w\|_{\infty}$, and thus, $\|w_0\|_{\infty}$ via the constant $c_4$). Therefore, there exists  $\Delta>0$ such that $h(t-t_k)\leq \varepsilon$ for all $t\leq t_k+\Delta$, which completes the proof.
\end{proof}

\begin{remark}[State-dependent $\varepsilon$]
In Algorithm \ref{algorithm} the trigger-parameter $\varepsilon$ is chosen as a constant.  An alternative implementation would be to choose $\varepsilon$ dependent on the state. The minimum inter-trigger time $\Delta$ decreases with increasing state norm. This could lead to frequent sampling if, e.g., the initial condition is far from the origin. In such situations it would likely be acceptable if the control specifications were relaxed, so that the state is first brought ``closer'' to the origin before the controller is tightened. For instance, if  $\varepsilon=\frac{\gamma}{c_2}\|w(\cdot,t)\|_{\infty}$ with $\gamma<1$, then by (\ref{bound 2}) one has that $\|w(\cdot,t)\|_{\infty}$ reduces by at least $\gamma$ every  $\tau^v(0)+\tau^u(1)$ units of time. That is, one would obtain exponential convergence. Such state dependent switching could also be combined with a minimum fixed $\varepsilon$ so that the system converges exponentially when it is outside a ball of certain size and is then only kept within that ball. The latter version would avoid switches that might be unnecessary if the system is already ``close enough'' to the origin, as implemented in Section \ref{sec:example}.
\end{remark}

\begin{remark}[Periodic execution and robustness]
Instead of continuously measuring the state, performing the predictions in lines 1-2, and evaluating the trigger condition in line 3 of Algorithm \ref{algorithm}, it would be more practicable to perform these steps only periodically at times $\tilde{t}_i = i\,\tilde{\Delta}$ with some sampling period $\tilde{\Delta}>0$. For instance, by choosing $\tilde{\Delta}<\Delta$ sufficiently small   such that one can prove $|\bar{v}(0,\tilde{t}_{i+1}) - \bar{v}(0,\tilde{t}_{i} )|\leq \frac{1}{2}\varepsilon$, and replacing the trigger condition in line 3 by $|\bar{v}^{\circ}(0,\tilde{t}_{i})|> \frac{1}{2}\varepsilon$, it is possible to guarantee that if the update condition is not satisfied at time $\tilde{t}_i$, then the boundary value still satisfies $|\bar{v}(0,t)|\leq  |\bar{v}(0,\tilde{t}_{i})| + \frac{1}{2}\varepsilon\leq \varepsilon$ for all $t\in [\tilde{t}_{i},\tilde{t}_{i+1}]$.

Similarly, one could account for prediction errors due to model uncertainty or disturbances by tightening the trigger condition further, so that the prediction $|\bar{v}^{\circ}(0,t)|$ plus an error term  still remains below $\varepsilon$. The allowable uncertainties would then depend on $\varepsilon$ or, conversely, the achievable $\varepsilon$ would be limited by the given uncertainty. See  the robustness result in \cite{strecker2021quasilinear2x2}.
\end{remark}

\begin{remark}[Reference tracking]
The developments above focus on stabilization of an equilibrium. Alternatively, tracking objectives of the form 
\begin{equation}
v(0,t) = g_{\text{ref}}(t),
\end{equation}
which includes many objectives of the form $\tilde{g}_{\text{ref}}(w(0,t),t)=0$ (substituting $u(0,t)$ by  (\ref{uBC}) and solving for $v(0,t)$), can be solved by replacing the boundary condition of the target system (\ref{vbar*_x})-(\ref{vbar*_BC}) by 
\begin{align}
\bar{v}^*_x(x,t)&= -\frac{f^v((\bar{u}(x,t),\bar{v}^*(x,t)),x)}{\lambda^v(x)}, \\
\bar{v}^*(0,t) &= g_{\text{ref}}(t+\tau^v(0)),
\end{align}
and setting the trigger condition in line 3 of Algoruthm \ref{algorithm} to
\begin{equation}
|\bar{v}^{\circ}(0,t)-g_{\text{ref}}(t+\tau^v(0))|>\varepsilon.
\end{equation}
\end{remark}

\subsection{Closed-form expression for linear systems} \label{sec:linear closed form}
Evaluating the trigger condition requires solving a nonlinear PDE online when predicting $\bar{u}(\cdot,t)$ and then solving an ODE to obtain $\bar{v}^{\circ}(0,t)$. As is the case for the continuous-time state-feedback controller  \cite[Section 3.4]{strecker2017output}, it is possible to express the trigger condition as a simple integral  of kernels (which can be precomputed) weighting the state.

Consider linear systems of the form 
\begin{align}
u_t &= -\epsilon_1(x) u_{x} + c_1(x) v,\label{linear1}\\
v_t & = \epsilon_2(x) v_{x} + c_2(x)u,\\
v(1,t) &= U(t),\\
u(0,t) &= qv(0,t).\label{linear4}
\end{align}
The derivations in \cite[Section 3.4]{strecker2017output} can be modified slightly to show that the trigger condition can be evaluated via
\begin{equation}\begin{aligned}
\bar{v}^{\circ}(0,t) = \bar{U} - \int_0^1 & \Kvu(1,\xi)u(\xi,t) - \Kvv(1,\xi)v(\xi,t) d\xi,\label{ansatz}
\end{aligned}\end{equation}
where the kernels $\Kvu$ and $\Kvv$ are the solution of the PDE system given in  \cite[Equations (24)-(31)]{vazquez2011backstepping}.
At each trigger instance, the control inputs can be updated as
\begin{equation}
\bar{U} = \int_0^1  \Kvu(1,\xi)u(\xi,t_k) + \Kvv(1,\xi)v(\xi,t_k) d\xi.
\end{equation}
In terms of the original backstepping transformation, this amounts to only updating the control input when the boundary value of the target system (see Equations (5)-(8) in \cite{vazquez2011backstepping})  exceeds the threshold $|\beta(1,t)|\leq \varepsilon$.

\section{Numerical examples}\label{sec:example}
\begin{figure}[htbp!]\centering
\includegraphics[width=\columnwidth]{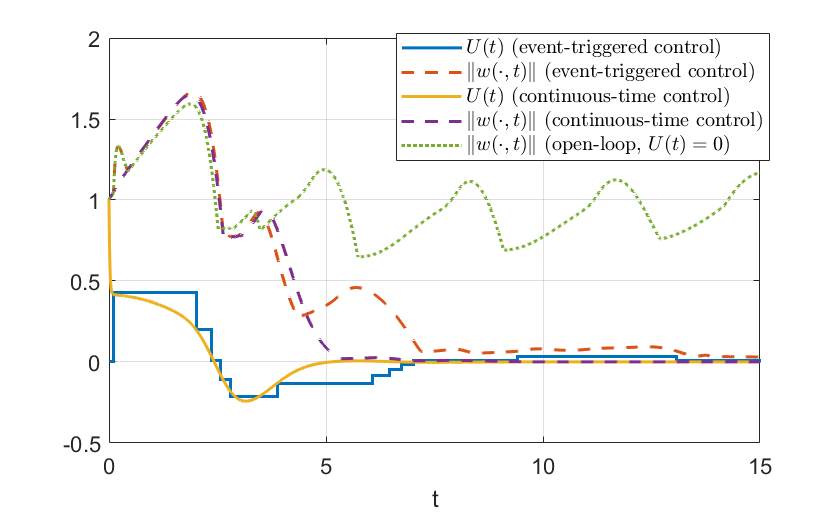}\\
\includegraphics[width=\columnwidth]{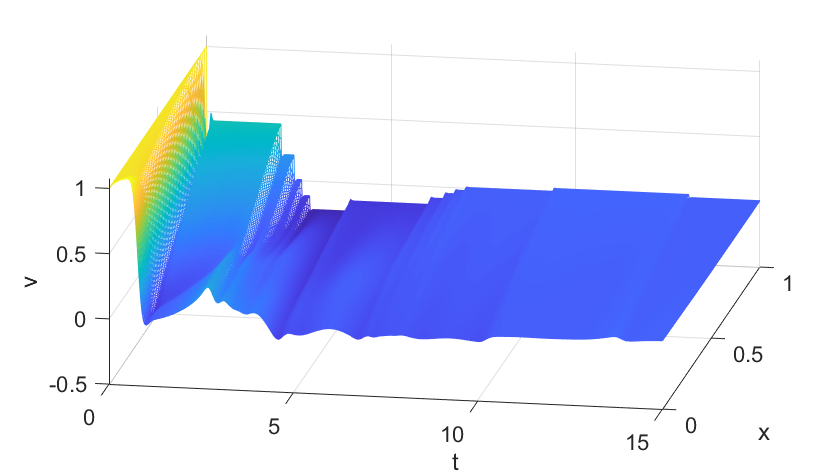}\\
\includegraphics[width=\columnwidth]{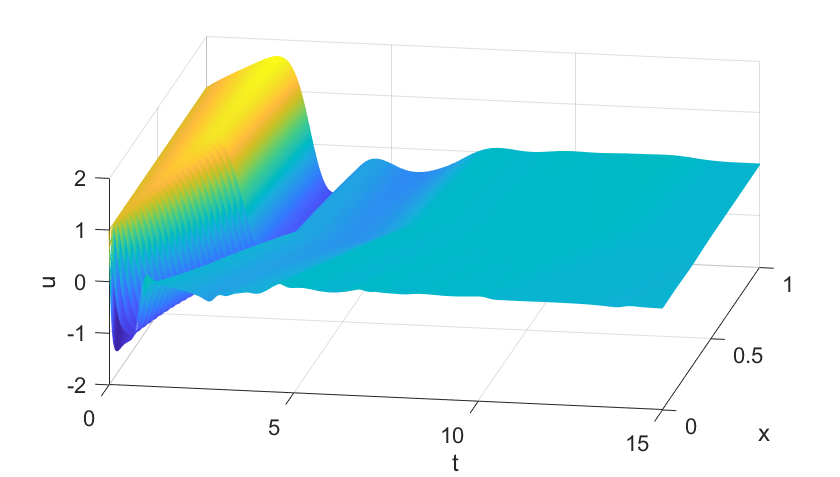}
\caption{Trajectories of the simulation example.}
\label{fig:example}
\end{figure}
The performance of the controller is illustrated below within the context of  numerical simulations of a system with
\begin{align}
\lambda^u(x) &=\begin{cases} 0.2 &\text{if }x<0.5,\\ 2-x &\text{if }x\geq0.5,\end{cases},\\
\lambda^v(x) &=1+0.5\,x,\\
f^u((u,v),x)&=\frac{1}{3-x} \sin(u+v),\\
 f^v((u,v),x) &= \sin(v-u),\\
 g(v,t)&= -v,
\end{align}
and initial condition $u_0(x)=v_0(x)=1$ for all $x\in[0,1]$. The parameters are such that in open-loop with $U(t)=0$ for all $t\geq 0$, the origin is an unstable equilibrium. The simulation and prediction operators are implemented by the method of lines, i.e., semi-discretizing the PDEs in space using first-order finite differences with 50 spatial elements. The resulting high-order ODE is solved in matlab using ode45, where the trigger events are implemented using the odeset('Events',$\ldots$) option. The trigger-parameter is chosen as $\varepsilon = \max\{0.05,0.25\,\|w(\cdot,t)\|_{\infty}\}$. 

The simulated trajectories are shown in Figure \ref{fig:example}. Overall the closed-loop trajectories converge to the origin as expected from the theory.   Convergence is slightly slower when the event-triggered controller is used, as compared to the continuous-time implementation where the control input is updated at every time step of the solver, but performance is still very good.  One can also see the correlation that there are more trigger times  when the continuous control input changes quickly, whereas the control input in the event-triggered version is constant for long periods when the system is close to the origin. In a real-world system this would reduce wear on the actuators significantly.

\section{Conclusion}\label{sec:conclusions}
We proposed an event-triggered implementation of predictive boundary controllers for semilinear hyperbolic systems. A relatively simple static trigger condition is used. The same trigger-mechanism can also be applied to backstepping control of linear systems, where a less computationally expensive implementation is possible. Compared to other approaches to event-triggered control of hyperbolic PDEs in the literature, no Lyapunov functions are required, which avoids some conservativeness. The approach is also generalizable. For instance, by replacing  the state measurement in our event-triggered control scheme   with the state estimate obtained via the finite-time convergent boundary observers from \cite{strecker2017output}, one  solves the event-triggered output feedback control problem. In quasilinear systems, Lipschitz-continuity of the control inputs is necessary for well-posedness of broad solutions. Therefore, when extending the proposed event-triggered control  scheme to quaslinear systems, the transition between different values of $\bar{U}$ needs to be smooth instead of by a jump. Once that transition is completed, the control input can remain constant until the update condition is triggered again.

\bibliographystyle{IEEEtran}
\bibliography{references}      

\begin{thebibliography}{10}
\providecommand{\url}[1]{#1}
\csname url@samestyle\endcsname
\providecommand{\newblock}{\relax}
\providecommand{\bibinfo}[2]{#2}
\providecommand{\BIBentrySTDinterwordspacing}{\spaceskip=0pt\relax}
\providecommand{\BIBentryALTinterwordstretchfactor}{4}
\providecommand{\BIBentryALTinterwordspacing}{\spaceskip=\fontdimen2\font plus
\BIBentryALTinterwordstretchfactor\fontdimen3\font minus
  \fontdimen4\font\relax}
\providecommand{\BIBforeignlanguage}[2]{{%
\expandafter\ifx\csname l@#1\endcsname\relax
\typeout{** WARNING: IEEEtran.bst: No hyphenation pattern has been}%
\typeout{** loaded for the language `#1'. Using the pattern for}%
\typeout{** the default language instead.}%
\else
\language=\csname l@#1\endcsname
\fi
#2}}
\providecommand{\BIBdecl}{\relax}
\BIBdecl

\bibitem{bastin2016book}
G.~Bastin and J.-M. Coron, \emph{Stability and boundary stabilization of 1-d
  hyperbolic systems}.\hskip 1em plus 0.5em minus 0.4em\relax Springer, 2016,
  vol.~88.

\bibitem{greenberg1984effect}
J.~M. Greenberg and T.-T. Li, ``The effect of boundary damping for the
  quasilinear wave equation,'' \emph{Journal of Differential Equations},
  vol.~52, no.~1, pp. 66--75, 1984.

\bibitem{coron2007strict}
J.-M. Coron, B.~d'Andrea Novel, and G.~Bastin, ``A strict {L}yapunov function
  for boundary control of hyperbolic systems of conservation laws,'' \emph{IEEE
  Transactions on Automatic Control}, vol.~52, no.~1, pp. 2--11, 2007.

\bibitem{cirina1969boundary}
M.~Cirina, ``Boundary controllability of nonlinear hyperbolic systems,''
  \emph{SIAM Journal on Control}, vol.~7, no.~2, pp. 198--212, 1969.

\bibitem{li2003exact}
T.-T. Li and B.-P. Rao, ``Exact boundary controllability for quasi-linear
  hyperbolic systems,'' \emph{SIAM Journal on Control and Optimization},
  vol.~41, no.~6, pp. 1748--1755, 2003.

\bibitem{vazquez2011backstepping}
R.~Vazquez, M.~Krstic, and J.-M. Coron, ``Backstepping boundary stabilization
  and state estimation of a 2$\times$ 2 linear hyperbolic system,'' in
  \emph{2011 50th IEEE CDC and European Control Conference}, 2011.

\bibitem{aamo2013disturbance}
O.~M. Aamo, ``Disturbance rejection in $2\times2$ linear hyperbolic systems,''
  \emph{IEEE Transactions on Automatic Control}, vol.~58, no.~5, pp.
  1095--1106, 2013.

\bibitem{vazquez2016bilateral}
R.~Vazquez and M.~Krstic, ``Bilateral boundary control of one-dimensional
  first-and second-order {PDEs} using infinite-dimensional backstepping,'' in
  \emph{Decision and Control (CDC), 2016 IEEE 55th Conference on}.\hskip 1em
  plus 0.5em minus 0.4em\relax IEEE, 2016, pp. 537--542.

\bibitem{auriol2016two}
J.~Auriol and F.~Di~Meglio, ``Two-sided boundary stabilization of two linear
  hyperbolic {PDEs} in minimum time,'' in \emph{Decision and Control (CDC),
  2016 IEEE 55th Conference on}.\hskip 1em plus 0.5em minus 0.4em\relax IEEE,
  2016, pp. 3118--3124.

\bibitem{deutscher2017finite}
J.~Deutscher, ``Finite-time output regulation for linear 2$\times$ 2 hyperbolic
  systems using backstepping,'' \emph{Automatica}, vol.~75, pp. 54--62, 2017.

\bibitem{auriol2016minimum}
J.~Auriol and F.~Di~Meglio, ``Minimum time control of heterodirectional linear
  coupled hyperbolic {PDEs},'' \emph{Automatica}, vol.~71, pp. 300--307, 2016.

\bibitem{di2018stabilizationPDEODE}
F.~Di~Meglio, F.~B. Argomedo, L.~Hu, and M.~Krstic, ``Stabilization of coupled
  linear heterodirectional hyperbolic pde--ode systems,'' \emph{Automatica},
  vol.~87, pp. 281--289, 2018.

\bibitem{auriol2020seriesinterconnection}
J.~Auriol, ``Output feedback stabilization of an underactuated cascade network
  of interconnected linear {PDE} systems using a backstepping approach,''
  \emph{Automatica}, vol. 117, p. 108964, 2020.

\bibitem{strecker2017output}
T.~Strecker and O.~M. Aamo, ``Output feedback boundary control of $2\times2$
  semilinear hyperbolic systems,'' \emph{Automatica}, vol.~83, pp. 290--302,
  2017.

\bibitem{strecker2017seriesinterconnections}
------, ``Output feedback boundary control of series interconnections of
  $2\times 2$ semilinear hyperbolic systems,'' in \emph{20th IFAC World
  Congress, 2017}.\hskip 1em plus 0.5em minus 0.4em\relax IFAC, 2017.

\bibitem{strecker2017twosided}
------, ``Two-sided boundary control and state estimation of $2\times 2$
  semilinear hyperbolic systems,'' in \emph{2017 IEEE 56th Annual Conference on
  Decision and Control (CDC)}.\hskip 1em plus 0.5em minus 0.4em\relax IEEE,
  2017, pp. 2511--2518.

\bibitem{strecker2021quasilinear2x2}
T.~Strecker, O.~M. Aamo, and M.~Cantoni, ``Boundary feedback control of 2x2
  quasilinear hyperbolic systems: Predictive synthesis and robustness
  analysis,'' \emph{IEEE Transactions on Automatic Control}, 2021.

\bibitem{strecker2021quasilinear_PDE_ODE}
------, ``Predictive feedback boundary control of semilinear and quasilinear
  $2\times 2$ hyperbolic {PDE}-{ODE} systems,'' \emph{accepted to Automatica;
  https://arxiv.org/abs/2105.09039}, 2022.

\bibitem{espitia2017event}
N.~Espitia, A.~Girard, N.~Marchand, and C.~Prieur, ``Event-based boundary
  control of a linear $2\times 2$ hyperbolic system via backstepping
  approach,'' \emph{IEEE Transactions on Automatic Control}, vol.~63, no.~8,
  pp. 2686--2693, 2017.

\bibitem{espitia2020observer}
N.~Espitia, ``Observer-based event-triggered boundary control of a linear
  2$\times$ 2 hyperbolic systems,'' \emph{Systems \& Control Letters}, vol.
  138, p. 104668, 2020.

\bibitem{wang2021adaptive}
J.~Wang and M.~Krstic, ``Adaptive event-triggered pde control for load-moving
  cable systems,'' \emph{Automatica}, vol. 129, p. 109637, 2021.

\bibitem{wang2020event}
------, ``Event-triggered backstepping control of 2$\times$ 2 hyperbolic
  pde-ode systems,'' \emph{IFAC-PapersOnLine}, vol.~53, no.~2, pp. 7551--7556,
  2020.

\bibitem{davo2018stability}
M.~A. Dav{\'o}, D.~Bresch-Pietri, C.~Prieur, and F.~Di~Meglio, ``Stability
  analysis of a $2\times 2$ linear hyperbolic system with a sampled-data
  controller via backstepping method and looped-functionals,'' \emph{IEEE
  Transactions on Automatic Control}, vol.~64, no.~4, pp. 1718--1725, 2018.

\bibitem{heemels2012introduction}
W.~P. Heemels, K.~H. Johansson, and P.~Tabuada, ``An introduction to
  event-triggered and self-triggered control,'' in \emph{51st conference on
  decision and control}.\hskip 1em plus 0.5em minus 0.4em\relax IEEE, 2012.

\bibitem{dimeglio2013stabilization}
F.~Di~Meglio, R.~Vazquez, and M.~Krstic, ``Stabilization of a system of $ n+ 1$
  coupled first-order hyperbolic linear {PDEs} with a single boundary input,''
  \emph{IEEE Transactions on Automatic Control}, vol.~58, no.~12, pp.
  3097--3111, 2013.

\bibitem{bressan2000hyperbolic}
A.~Bressan, \emph{Hyperbolic systems of conservation laws: the one-dimensional
  Cauchy problem}.\hskip 1em plus 0.5em minus 0.4em\relax Oxford University
  Press, 2000, vol.~20.

\bibitem{li2000semi}
T.-T. Li, B.~Rao, and Y.~Jin, ``Semi-global {$C^1$} solution and exact boundary
  controllability for reducible quasilinear hyperbolic systems,'' \emph{ESAIM:
  Mathematical Modelling and Numerical Analysis}, vol.~34, no.~2, pp. 399--408,
  2000.

\end{thebibliography}

\end{document}